\newtheorem{theorem}{Theorem}[section]
\newtheorem{definition}[theorem]{Definition}
\numberwithin{equation}{section}
\newtheorem{lemma}[theorem]{Lemma}
\newtheorem{remark}[theorem]{Remark}
\newtheorem{claim}[theorem]{Claim}
\numberwithin{equation}{section}
\def\N{\mathbb{N}}
\def\Z{\mathbb{Z}}
\newcommand{ \cL}{ \mathcal L }
\renewcommand{\phi}{\varphi}
\renewcommand{\epsilon}{\varepsilon}
\newcommand{\1}{{\text{\Large $\mathfrak 1$}}}
\renewcommand{\emptyset}{\varnothing}
\newcommand{\pr}[1]{\mathbb{P}\!\left(#1\right)}
\newcommand{\E}[1]{\mathbb{E}\!\left[#1\right]}
\newcommand{\prstart}[2]{\mathbb{P}_{#2}\!\left(#1\right)}
\newcommand{\bcap}[1]{{\mathrm{BCap}}(#1)}
\def\cR{\mathcal{R}}
\def\cF{\mathcal{F}}
\newcommand{\tn}{|\kern-.1em|\kern-0.1em|}
\newcommand\be{\begin{equation}}
	\newcommand\ee{\end{equation}}
\newcommand{\cT}{\mathcal{T}}
\newcommand{\musb}{\mu_{\mathrm{sb}}}
\newcommand{\ball}[2]{B(#1,#2)}
\begin{document}
	
	\title{\bf Derivative formula for capacities}
	
	\author{Amine Asselah \thanks{
			Universit\'e Paris-Est, LAMA, UMR 8050, UPEC, UPEMLV, CNRS, F-94010 Cr\'eteil; amine.asselah@u-pec.fr} \and
		Bruno Schapira\thanks{Universit\'e Claude Bernard Lyon 1, Institut Camille Jordan, CNRS UMR 5208, 43 Boulevard du 11 novembre 1918, 69622 Villeurbanne Cedex, France;  schapira@math.univ-lyon1.fr} \and Perla Sousi\thanks{University of Cambridge, Cambridge, UK;   p.sousi@statslab.cam.ac.uk} 
	}
	\date{}
	\maketitle
	
	\begin{abstract}  
		We obtain a derivative formula for various notions of capacity. Namely we identify the second order term in the asymptotic expansion of the capacity of a union of two sets, as their distance goes to infinity. Our result applies to the usual Newtonian capacity in the setting of random walks on the Euclidean lattice, to the family of Bessel-Riesz capacities, and to the Branching capacity, which has been introduced recently by Zhu~\cite{Zhu} in connection with critical Branching random walks. On the other hand, the result remains open for the notion of capacity in the setting of percolation, which is introduced in a companion paper, but serves as a motivation, as it would have some interesting consequences there.

		\bigskip
		\noindent \emph{Keywords and phrases.} Newtonian capacity, Bessel-Riesz capacity, Branching capacity.

		\noindent MSC 2010 \emph{subject classifications.} Primary 60J45, Secondary 60J80, 31C20.
	\end{abstract}

	\section{Introduction}
	Our aim in this paper is to prove some derivative formula for various notions of capacities, which were motivated by our recent study of the notion of capacity in high dimensional critical percolation~\cite{ASS25}. Curiously these formulas, despite being very simple, seem to be new, to the best of our knowledge.

	Let us start with a simple example, namely the Newtonian capacity appearing in the setting of random walks. We denote by $(S_n)_{n\ge 0}$ a simple random walk started from $0$ on $\mathbb Z^d$, with $d\ge 3$, and by $g(x) = \sum_{n\ge 0} \mathbb P(S_n=x)$ its associated Green's function. It is well-known, see~\cite{LL10}, that the following limit exists and defines the Newtonian capacity of any finite (and nonempty) set $A\subset \mathbb Z^d$, 
	\begin{equation}\label{def.New.cap}
		\textrm{Cap}(A) = \lim_{\|z\|\to \infty} \frac{\mathbb P(\mathcal (z+R_\infty) \cap A\neq \emptyset)}{g(z)}, 
	\end{equation}
	where $\mathcal R_\infty = \{S_0,S_1,\dots\}$ is the range of the walk. 
	In this setting, our result states that for any finite sets $A,B\subset \mathbb Z^d$,  
	\begin{equation}\label{limit.NewCap}
		\lim_{\|z\|\to \infty}\frac{ \textrm{Cap}(A) +  \textrm{Cap}(B) -  \textrm{Cap}(A \cup(z + B))}{g(z)} = 2 \cdot  \textrm{Cap}(A)\cdot \textrm{Cap}(B).
	\end{equation}
	The second example we treat is the family of the so-called Bessel-Riesz capacities. 
	Recall that they are defined for any $\alpha\in (0,d)$, by 
	\begin{equation}\label{def.capalpha}
		\textrm{Cap}_\alpha (A) = \Big (\inf \big\{\sum_{x,y\in A} g_\alpha(y-x) \mu(x)\mu(y) : \mu \textrm{ probability measure on }A\big\}\Big)^{-1},
	\end{equation}
	where $g_\alpha(z) = (1+\|z\|)^{-\alpha}$ with $\|\cdot \|$ denoting the Euclidean norm. As above, we prove that for any $\alpha \in (0,d)$, and any finite $A,B\subset \mathbb Z^d$, 
	\begin{equation}\label{limit.RieszCap}
		\lim_{\|z\|\to \infty}\frac{ \textrm{Cap}_\alpha(A) +  \textrm{Cap}_\alpha(B) -  \textrm{Cap}_\alpha(A \cup(z + B))}{g_\alpha(z)} = 2 \cdot  \textrm{Cap}_\alpha(A)\cdot \textrm{Cap}_\alpha(B).
	\end{equation}	
	Finally the third example we consider is the recently introduced notion of branching capacity. 
	To define it, let $\cT_c$ be a critical Bienaym\'e-Galton-Watson tree, i.e.\ the offspring distribution has mean~$1$ and finite variance $\sigma^2<\infty$. We assign i.i.d.\ simple random walk in $\Z^d$ increments to the edges of~$\cT_c$. The branching random walk indexed by $\cT_c$ and started from $z\in \Z^d$ is the process $(S_u)_{u\in \mathcal T_c}$ defined as follows: $S_\emptyset=z$, where $\emptyset$ denotes the root of $\cT_c$ and for any other vertex $u\in \cT_c$ the value $S_u$ is defined as the sum of $z$ plus the sum of the increments on the edges of the tree along the shortest path from $\emptyset$ to $u$. 
	We denote by $\mathcal T_c^z=\{S_u :  u\in \mathcal T_c\}$ its range. It has been proved by Zhu~\cite{Zhu} that the following limit exists for any finite $A\subset\mathbb Z^d$, and defines the branching capacity of~$A$: 
	\begin{equation}\label{BCaphit}
		\textrm{BCap}(A)= \lim_{\|z\|\to \infty}\frac{\mathbb P(\mathcal T^z_c \cap A \neq \emptyset ) }{g(z)}. 
	\end{equation}
	We stress that this notion of branching capacity has proven to be a fundamental tool in recent studies of critical branching random walks~\cite{ASS23,ASS25,AOSS25,Sch, Zhu, Zhu2,Zhu3,Zhu4}.  
	Our result is that for any finite sets $A$ and $B$, 
	\begin{equation}\label{limit.BCap}
		\lim_{\|z\|\to \infty}\frac{ \textrm{BCap}(A) +  \textrm{BCap}(B) -  \textrm{BCap}(A \cup(z + B))}{G(z)} = 2 \cdot  \textrm{BCap}(A)\cdot \textrm{BCap}(B), 
	\end{equation}
	where the function $G$ will be defined later, in Section~\ref{sec.branchingcap}, and satisfies the following asymptotic:
	\begin{equation}\label{asympG}
		G(z) \sim c_d \cdot \|z\|^{4-d},
	\end{equation} 
	for some constant $c_d>0$, see e.g.~\cite{ASS23}. 
	To conclude we note that a similar result has been conjectured in the setting of percolation capacity in~\cite{ASS25}. In particular, if true, it would have interesting consequences on the asymptotic probability that the so-called Incipient Infinite Cluster (which is a critical percolation cluster conditioned on being infinite) intersects a finite set.

	Interestingly, our proofs of the three results~\eqref{limit.NewCap}, \eqref{limit.RieszCap} and~\eqref{limit.BCap} rely on different arguments. Namely for the proof of~\eqref{limit.NewCap}, in the setting of Newtonian capacity, which is done in Section~\ref{sec.Newton}, we rely on an exact expression of the capacity of a union of two sets in terms of a ``cross term", which was introduced in~\cite{ASS19}. 
	Concerning the proof of~\eqref{limit.RieszCap}, which is done in Section~\ref{sec.Rieszcap}, we make use of various equivalent variational formulas defining them. Finally for the proof of~\eqref{limit.BCap} in Section~\ref{sec.branchingcap}, we rely on specific properties of branching capacity, in particular we use that equilibrium measure can be expressed both as an escape probability and a harmonic measure from infinity, as established by~Zhu~\cite{Zhu}.

	
	\section{Case of the Newtonian capacity}\label{sec.Newton}
	Fix $A$ a finite subset of $\mathbb Z^d$, with $d\ge 3$. 
	Denote here by $\mathbb P_x$ the law of a simple random walk $(S_n)_{n\ge 0}$ on $\mathbb Z^d$ starting from $x$ (abbreviated in $\mathbb P$ when the random walk starts from the origin), and by $H_A=\inf\{n\ge 0 : S_n \in A\}$, the hitting time of $A$. We shall also write $H_A^+ = \inf\{n\ge 1 :S_n \in A\}$ for the first return time to $A$. 
	It was proved in~\cite[Proposition 1.6]{ASS19} that for any finite $A,B\subset \mathbb Z^d$, 
	$$\textrm{Cap}(A\cup B) = \textrm{Cap}(A) + \textrm{Cap}(B) - \chi(A,B) - \chi(B,A)  + \varepsilon(A,B),$$
	where 
	$$\chi(A,B) = \sum_{x\in A} \sum_{y\in B} \mathbb P_x(H_{A\cup B}^+ = \infty) g(y-x) \mathbb P_y(H_B^+ = \infty), $$
	and $0\le \varepsilon(A,B) \le \textrm{Cap}(A\cap B)$. 
	In particular, for any $\|z\|$ large enough, one has $A\cap (z+B) = \emptyset$, and consequently $\varepsilon(A,z+B) = 0$. Moreover, for any $x\in A$, one has $\lim_{\|z\|\to \infty} \mathbb P_x(H_{z+B} <\infty) = 0$, and thus also $\lim_{\|z\|\to \infty} \mathbb P_x(H_{A\cup (z+ B)}^+ = \infty) = \mathbb P_x(H_A^+= \infty)$. 
	This entails 
	$$\lim_{\|z\|\to \infty} \frac{\chi(A,z+B)}{g(z)} = \textrm{Cap}(A)\cdot \textrm{Cap}(B),$$
	and~\eqref{limit.NewCap} follows.

	Note that we could also have argued more directly as follows. First, by definition of the Newtonian capacity~\eqref{def.New.cap}, one has for any $z\in \mathbb Z^d$, 
	$$\textrm{Cap}(A) + \textrm{Cap}(B) - \textrm{Cap}(A\cup(z+B) =  \lim_{\|w\|\to \infty} \frac{\mathbb P_w(H_A<\infty, H_{z+B}<\infty)}{g(w)}.$$ 
	Next, observe that if $\|z\|$ is large enough so that $A\cap (z+B) = \emptyset$, one has 
	$$\mathbb P_w(H_A<\infty, H_{z+B}<\infty) = \mathbb P_w(H_A<H_{z+B}<\infty) + \mathbb P_w(H_{z+B}<H_A<\infty).$$
	We can then use the Markov property, and write, 
	$$\mathbb P_w(H_A<H_{z+B}<\infty) = \sum_{a\in A} \mathbb P_w(H_A<H_{z+B}, S_{H_A} =a) \cdot \mathbb P_a(H_{z+B}<\infty).$$
	Now it is known~\cite{LL10} that $g(z) =(1+o(1)) \|z\|^{2-d}$, and thus uniformly in $a\in A$, 
	$$\mathbb P_a(H_{z+B}<\infty) = \textrm{Cap}(B)\cdot g(z) + o(g(z)),$$ 
	and hence plugging this above we infer 
	$$\mathbb P_w(H_A<H_{z+B}<\infty) = \mathbb P_w(H_A<H_{z+B}) \cdot \big(\textrm{Cap}(B) \cdot g(z) + o(g(z))\big). $$
	Now, observing that $\mathbb P_w(H_A<H_{z+B}) = \mathbb P_w(H_A<\infty) - \mathcal O(g(w-z) g(z))$, we finally deduce that 
	$$\lim_{\|z\|\to \infty} \frac 1{g(z)} \lim_{\|w\|\to \infty} \frac{\mathbb P_w(H_A<H_{z+B}<\infty) }{g(w)} = \textrm{Cap}(A) \cdot \textrm{Cap}(B),$$
	whence the result. 
	When dealing with the branching capacity later, we will follow a similar strategy, though more complicated.


	\section{Case of general Bessel-Riesz capacities}\label{sec.Rieszcap}
	We prove here the result for general Bessel-Riesz capacities. It is folklore that $\textrm{Cap}_\alpha(A)$ satisfies
	\begin{align}\label{def.capalpha2}
		\textrm{Cap}_\alpha(A)  = \sup_{\varphi : A \to \mathbb R_+} \Big\{\sum_{x\in A} \varphi(x) : \sup_{x\in A} g_\alpha * \varphi(x) \le 1\Big\} =  \inf_{\varphi : A \to \mathbb R_+}\Big\{\sum_{x\in A} \varphi(x) :  \inf_{x\in A} g_\alpha * \varphi(x) \ge 1\Big\}, 
	\end{align}
	see e.g.~the Appendix in~\cite{AS24} for a proof of the first equality, together with the fact that if $\mu$ is a probability measure realizing the infimum in~\eqref{def.capalpha}, then the function $\varphi$ defined by 
	$\varphi(x) = \textrm{Cap}_\alpha(A) \cdot \mu(x)$, satisfies $g_\alpha * \varphi(x) = 1$, for all $x\in A$. From this, the second equality in~\eqref{def.capalpha2} follows, see e.g.~\cite{DRS}.

	Now let $A$ and $B$ be finite subsets of $\mathbb Z^d$, and let $z\in \mathbb Z^d$, be such that $A \cap (z+B) = \emptyset$. 
	Let $\mu_A$ and $\mu_B$ be probability measures realizing the infimum in~\eqref{def.capalpha}, respectively for $\textrm{Cap}_\alpha(A)$ and 
	$\textrm{Cap}_\alpha(B)$, and let $\varphi_A$ and $\varphi_B$ be defined by $\varphi_A(x) = \textrm{Cap}_\alpha(A)\cdot \mu_A(x)$,  and $\varphi_B(x) = \textrm{Cap}_\alpha(B)\cdot \mu_B(x)$, respectively (note that $\varphi_A$ is equal to zero outside $A$ and similarly for $\varphi_B$). 
	Consider $\varphi$ the function defined for $x\in \mathbb Z^d$, by 
	$$\varphi(x) = \varphi_A(x) + \varphi_B(x-z). $$ 
	Note that by definition, 
	$$\sum_{x\in \mathbb Z^d} \varphi(x) = \sum_{x\in A \cup (z+B)} \varphi(x) = \textrm{Cap}_\alpha(A)+ \textrm{Cap}_\alpha(B).$$ 
	Let then $\mu$ be the probability measure on $A\cup (z+B)$ defined by 
	$$\mu(x) = \frac{\varphi(x)}{\textrm{Cap}_\alpha(A)+ \textrm{Cap}_\alpha(B)}.$$  
	One has by~\eqref{def.capalpha}, 
	$$\textrm{Cap}_\alpha(A\cup (z+B)) \ge \frac 1{\sum_{x,y \in A\cup (z+B)} g_\alpha(y-x) \mu(x) \mu(y)},$$
	and using the definition of $\varphi_A$ and $\varphi_B$, we get that as $\|z\|\to \infty$, 
	\begin{align*}
		& \sum_{x,y \in A\cup (z+B)} g_\alpha(y-x) \mu(x) \mu(y) \\
		& = \frac{\textrm{Cap}_\alpha(A)}{(\textrm{Cap}_\alpha(A)+ \textrm{Cap}_\alpha(B))^2}
		+ \frac{\textrm{Cap}_\alpha(B)}{(\textrm{Cap}_\alpha(A)+ \textrm{Cap}_\alpha(B))^2} + 2\sum_{x\in A}\sum_{y \in z+B} g_\alpha(y-x) \mu(x) \mu(y)\\
		& = \frac{1}{\textrm{Cap}_\alpha(A)+ \textrm{Cap}_\alpha(B)}
		+ 2(1+o(1)) g_\alpha(z)\frac{\textrm{Cap}_\alpha(A)\cdot\textrm{Cap}_\alpha(B)}{(\textrm{Cap}_\alpha(A)+ \textrm{Cap}_\alpha(B))^2}. 
	\end{align*}
	Consequently, we deduce that 
	\begin{equation}\label{eq:capalpha1}
		\limsup_{\|z\|\to \infty} \frac{\textrm{Cap}_\alpha(A)+\textrm{Cap}_\alpha(B) - \textrm{Cap}_\alpha(A\cup (z+B))}{g_\alpha(z)} \le 2\cdot \textrm{Cap}_\alpha(A)\cdot\textrm{Cap}_\alpha(B). 
	\end{equation}
	For the other direction, fix some $\varepsilon \in (0,1)$, let 
	$$a = 1 - (1-\varepsilon)g_\alpha(z) \textrm{Cap}_\alpha(B), \quad \text{and}\quad b =  1 - (1-\varepsilon)g_\alpha(z) \textrm{Cap}_\alpha(A),$$
	and let for $x\in \mathbb Z^d$, 
	$$\psi (x) = a \cdot \varphi_A(x)  + b \cdot \varphi_B(x-z). $$ 
	We claim that when $\|z\|$ is large enough, one has 
	\begin{equation}\label{claim} 
		\inf_{x\in A \cup (z+B)} g_\alpha * \psi(x) \ge 1.
	\end{equation}
	Indeed, recall that $g_\alpha * \varphi_A(x) =1$ for all $x\in A$, and $g_\alpha * \varphi_B(x-z) =(1+o(1)) g_\alpha(z) \cdot \textrm{Cap}_\alpha(B)$, as $\|z\|\to \infty$, which entails 
	$$g_\alpha *\psi(x) = 1+ \varepsilon g_\alpha(z) \cdot \textrm{Cap}_\alpha(B) + o(g_\alpha(z)),$$ 
	as $\|z\|\to \infty$ for all $x\in A$, whence we deduce $\inf_{x\in A} g_\alpha * \psi(x) \ge 1$, for $\|z\|$ large enough. A similar argument shows as well that $\inf_{x\in z+B} g_\alpha * \psi(x) \ge 1$, for $\|z\|$ large enough, proving the claim~\eqref{claim}. Then by the second equality in~\eqref{def.capalpha2}, we get that for $\|z\|$ large enough, 
	\begin{align*}
		\textrm{Cap}_\alpha(A\cup (z+B)) & \le \sum_{x\in A\cup (z+B)} \psi(x) = a \cdot \textrm{Cap}_\alpha(A) + b \cdot \textrm{Cap}_\alpha(B)\\
		& = \textrm{Cap}_\alpha(A) + \textrm{Cap}_\alpha(B) - 2 (1- \varepsilon) g_\alpha(z) \cdot \textrm{Cap}_\alpha(A) \cdot \textrm{Cap}_\alpha(B). 
	\end{align*}
	Since this holds for any $\varepsilon>0$, we deduce that 
	\begin{equation}\label{eq:capalpha2}
		\liminf_{\|z\|\to \infty} \frac{\textrm{Cap}_\alpha(A)+\textrm{Cap}_\alpha(B) - \textrm{Cap}_\alpha(A\cup (z+B))}{g_\alpha(z)} \ge 2\cdot \textrm{Cap}_\alpha(A)\cdot\textrm{Cap}_\alpha(B). 
	\end{equation}
	Combining~\eqref{eq:capalpha1} and~\eqref{eq:capalpha2} yields
	$$
	\lim_{\|z\|\to \infty} \frac{\textrm{Cap}_\alpha(A)+\textrm{Cap}_\alpha(B) - \textrm{Cap}_\alpha(A\cup (z+B))}{g_\alpha(z)} = 2\cdot \textrm{Cap}_\alpha(A)\cdot\textrm{Cap}_\alpha(B). 
	$$

	
	\section{Case of the Branching capacity} 
	\label{sec.branchingcap} We first recall all the necessary definitions.
	So we consider $\mu$ a probability measure on the integers with mean one, and a finite third moment. We let $\mathcal T_c$ be a planar Bienaym\'e-Galton-Watson tree with offspring distribution $\mu$. 
	We shall also consider the infinite invariant tree $\mathcal T$. To define it, consider the measure $\musb$ defined by $\musb(i)= i\mu(i)$ for $i\in \mathbb N$.  Then $\mathcal T$ is a planar rooted tree defined as follows: 
	\begin{itemize}
		\item The root produces $i$ offspring with probability $\mu(i-1)$ for every $i\ge 1$. The first offspring of the root is \emph{special}, while the others if they exist are \emph{normal}. 
		\item Special vertices produce offspring independently according to $\musb$, while normal vertices produce offspring independently according to $\mu$. 
		\item Each special vertex produces exactly one special vertex chosen uniformly at random among its children, while the other children are normal.
	\end{itemize}
	The set of special vertices is called the spine of the tree, and is a copy of $\mathbb N$. The set of vertices on the left of the spine, including those on the spine, is called the past of $\mathcal T$ and denoted $\mathcal T_-$ (note that by definition the root is not part of the spine, and thus not part of $\mathcal T_-$ neither). We refer e.g. to~\cite{ASS23} for more details on these trees, in particular their fundamental property of invariance by rerooting. Letting $\widetilde \mu$ be defined by 
	$\widetilde \mu(i) = \sum_{j>i} \mu(j)$, the number of normal offspring of any special vertex, which belong to the left of the spine is distributed according to $\widetilde \mu$, and similarly for the number of normal offspring on the right of the spine. A tree whose root has a number of offspring distributed according to $\widetilde \mu$ and all other vertices according to $\mu$ is denoted $\widetilde{\mathcal T}_c$ and is called an adjoint tree. In particular by definition, the trees hanging off the spine (on its right or on its left) are distributed as $\widetilde{\mathcal T}_c$. Similarly a tree whose root has a number of offspring distributed according to $\musb$ minus one, and all other vertices according to $\mu$ is denoted $\widehat{\mathcal T}_c$. 
	
	The definition of a random walk indexed by a critical tree as given in the introduction generalises immediately to any general tree~$T$. We denote its range when started from $x$ by~$T^x$. 
	
	We now consider the simple random walk $(S_u)_{u\in \mathcal T}$ started from $0$ indexed by $\mathcal T$ and define the associated Green's function $G$, by setting for any $z\in \mathbb Z^d$
	\[
	G(z) =\E{\sum_{u\in \cT_-}\1(S_u=z)}.
	\]
   
	It was proved in~\cite{Zhu} that when $d\ge 5$, for any finite $A\subset \mathbb Z^d$, the branching capacity of $A$ satisfies 
	$$\textrm{BCap}(A) = \lim_{\|w\|\to \infty} \frac{\mathbb P(\mathcal T_c^w\cap A\neq \emptyset)}{g(w)}= \lim_{\|w\|\to \infty} \frac{\mathbb P(\mathcal T_-^w\cap A\neq \emptyset)}{G(w)} = \sum_{x\in A} \mathbb P(\mathcal T^x_- \cap A = \emptyset).$$  
	It follows in particular, using the exclusion-inclusion formula, that for any $z\in \mathbb Z^d$, and any finite $A,B\subset \mathbb Z^d$, 
	\begin{align}\label{eq:incexcl}
	\textrm{BCap}(A) + \textrm{BCap}(B) - \textrm{BCap}(A\cup (z+B))  = \lim_{\|w\|\to \infty} \frac{\mathbb P( \mathcal T^w_c \cap A\neq \emptyset, 
		\mathcal T^w_c \cap (z+B)\neq \emptyset)}{g(w)}. 
		\end{align}

The following definition will be used repeatedly in the proofs of the lemmas below. 

\begin{definition}\label{def:fgamma}
	\rm{
	Let $N\in \N$ and $\gamma:\{0,\ldots, N\}\to \Z^d$ be a finite path in $\Z^d$. We denote by $\cF_\gamma$ the range of a branching random walk indexed by a tree defined by taking a line of length $N+1$ and attaching trees to the left and the right of this line as follows: for $i\in \{0,\ldots, N\}$ the number of offspring of $i$ to the left (resp.\ to the right) of the line, denoted~$\ell_i$ (resp.\ $r_i$) have distribution
	\[
	\pr{\ell_i=\ell, r_i=r} = \mu(r+\ell+1)\quad  \text{ for } \ r,\ell\geq 0.
 	\]
 	The number of offspring of every other vertex which is not on the line is independently distributed according to $\mu$. 
	The root of the $i$-th tree is at location $\gamma(i)$ for $i=0,\ldots, N$ and we assign i.i.d.\ simple random walk increments to the edges of the attached trees. We write $\cF_{\gamma,-}$ for the range of the branching random walk indexed by the set of trees to the left of the line (including their roots) and excluding the first tree to the left. 
}
\end{definition}

\begin{remark}
	\rm{
	We note that in the definition above for every vertex $i$ on the line, the tree attached to $i$ to the left (or to the right) of the line has the distribution of an adjoint tree.
	}
\end{remark}

Let $\gamma:\{0,\ldots, N\}\to \Z^d$ be a finite path. We write $s(\gamma)$ for the probability that a simple random walk started from $\gamma(0)$ follows $\gamma$ for its first $N$ steps. 
	The following claim is a direct consequence of Lemma 6.1 in~\cite{Zhu}. We include a short proof here for the reader's convenience. 
	\begin{claim}\label{cl:fromzhu}
	Let $\epsilon<1/3$ and let $A\subseteq \Z^d$ be a finite subset. We then have  
	\begin{align*}
		\lim_{\|w\|\to\infty}\sup_{y\in\partial\ball{0}{\|w\|^{1-\epsilon}}}	\frac{1}{g(w)} \cdot \sum_{\gamma:y\to w}	s(\gamma) \cdot \pr{\cF_{\gamma}\cap A \neq \emptyset} = 0.
	\end{align*}
	\end{claim}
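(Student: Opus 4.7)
The plan is to reduce the sum in the claim, via a union bound, to a triple convolution of SRW Green's functions and then estimate that convolution region by region.

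First, by Definition~\ref{def:fgamma}, the set $\cF_\gamma$ is the union of the backbone points $\gamma(0),\dots,\gamma(N)$ together with the ranges of $\ell_i+r_i$ independent adjoint branching random walks rooted at each $\gamma(i)$. A union bound therefore yields
\[
\pr{\cF_\gamma\cap A\neq \emptyset}\le \sum_{i=0}^{N}\1(\gamma(i)\in A) + \sigma^2\sum_{i=0}^{N}\pr{\widetilde{\cT}_c^{\gamma(i)}\cap A\neq \emptyset}.
\]
Combining Zhu's asymptotic $\pr{\widetilde{\cT}_c^{x}\cap A\neq \emptyset}\sim \textrm{BCap}(A)\,g(x)$ as $\|x\|\to\infty$ with the trivial bound $\le 1$ supplies a constant $C=C(A)$ such that $\pr{\widetilde{\cT}_c^{x}\cap A\neq \emptyset}\le C(1\wedge g(x))$ for every $x\in\Z^d$.

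Next, I would exchange summations. Splitting any backbone $\gamma:y\to w$ at the index where it visits a point $z$ and using the Markov property for SRW gives the identity
\[
\sum_{\gamma:y\to w}s(\gamma)\sum_{i=0}^{|\gamma|}f(\gamma(i)) = \sum_{z\in\Z^d}g(z-y)\,g(w-z)\,f(z),
\]
valid for any $f:\Z^d\to\R_+$. Applying this once with $f=\1_A$ and once with $f(z)=\sum_{a\in A}(1\wedge g(z-a))$, and absorbing the finite set $A$ into the constant, reduces the claim to showing that
\[
g(y)\,g(w)\; +\; \sum_{z\in\Z^d}g(z-y)\,g(w-z)\,(1\wedge g(z)) \;=\; o(g(w))
\]
uniformly over $y\in\partial\ball{0}{\|w\|^{1-\epsilon}}$.

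Finally, using the sharp asymptotic $g(x)\asymp(1+\|x\|)^{2-d}$ in dimension $d\ge 5$, I would partition $\Z^d$ into the regions $\{\|z\|\le\|y\|/2\}$, $\{\|y\|/2<\|z\|\le\|w\|/2\}$, and $\{\|z\|>\|w\|/2\}$, and within the middle region further distinguish whether $z$ is close to $y$ or not. A routine term-by-term integration then bounds the triple convolution by a constant multiple of $\bigl(\|y\|^{4-d}+\|w\|^{4-d}\bigr)g(w)$, while the additive contribution $g(y)g(w)\asymp\|y\|^{2-d}g(w)$ is manifestly $o(g(w))$. Since $d\ge 5$ and $\|y\|=\|w\|^{1-\epsilon}\to\infty$, every such piece is $o(g(w))$, uniformly in $y$. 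The main obstacle is the bookkeeping of these regions uniformly in $y$; however, because only a qualitative $o(g(w))$ bound is needed, the assumption $d\ge 5$ (together with $\epsilon<1$) makes each piece comfortably summable and yields the claim.
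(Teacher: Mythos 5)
Your proposal is correct but follows a genuinely different route from the paper's. The paper starts from the exact product formula $\pr{\cF_\gamma\cap A\neq\emptyset}=1-\prod_i(1-r_A(\gamma(i)))$, invokes the local CLT to truncate the sum to paths of length $\le\|w\|^{2+\delta}$, and then splits the remaining paths according to whether they re-enter a small ball around the origin. You instead bound $\pr{\cF_\gamma\cap A\neq\emptyset}$ by a union bound over the backbone indices, and then convert the resulting weighted sum over paths into a closed-form triple convolution
\[
\sum_{z\in\Z^d} g(z-y)\,g(w-z)\,\bigl(1\wedge g(z)\bigr),
\]
which you then estimate annulus by annulus. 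This bypasses the path-length truncation and the local CLT entirely, and the region-by-region computation indeed yields the bound $C\bigl(\|y\|^{4-d}+\|w\|^{4-d}\bigr)g(w)$, which is $o(g(w))$ uniformly on $\partial B(0,\|w\|^{1-\epsilon})$ whenever $d\ge 5$. In fact your convolution estimate works for any $\epsilon<1$, not just $\epsilon<1/3$, so you prove a slightly stronger version of the claim; the restriction $\epsilon<1/3$ in the paper is an artefact of its crude bound $\bigl(1-(1-cR^{-(1-\delta)(d-2)})^{\|w\|^{2+\delta}}\bigr)$, which forces $(1-\epsilon)(d-2)>2$.

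Two small inaccuracies that do not affect the argument but should be fixed. First, the ``bushes'' attached to $\gamma(i)$ on the left and on the right are each distributed as a single adjoint tree $\widetilde{\cT}_c$ rooted \emph{at} $\gamma(i)$; they are not ``$\ell_i+r_i$ independent adjoint branching random walks.'' The correct union bound is simply $\pr{\cF_\gamma\cap A\neq\emptyset}\le 2\sum_{i}\pr{\widetilde{\cT}_c^{\gamma(i)}\cap A\neq\emptyset}$ (the backbone points are already contained in the adjoint trees' ranges, so the separate indicator term is redundant). Second, the constant you write as $\sigma^2$ is therefore just $2$; the $\sigma^2$ would only appear if you further decomposed each adjoint tree into critical subtrees hanging off its root, which you do not need here. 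Since any absolute constant works, the conclusion is unaffected, but the write-up should be cleaned up to avoid the reader inferring a first-moment computation that is not actually performed.
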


	\begin{proof}[\bf Proof]
		Writing $r_A(x)$ for the probability that an adjoint tree hits the set $A$ when it starts from $x$, we get 
		\begin{align*}
			\pr{\cF_{\gamma}\cap A\neq \emptyset} = 1- \prod_{i=1}^{|\gamma|} (1-r_{A}(\gamma(i))).
		\end{align*}
		By a union bound we get
		\begin{align}\label{eq:unionboudn}
			r_{A}(\gamma(i)) \lesssim |A| \cdot\max_{a\in A}g(\gamma(i)-a).
		\end{align}
Let $\delta>0$ be sufficiently small to be chosen later. Using the local CLT (see e.g.~\cite{LL10})), it is straightforward to check that as $\|x\|\to\infty$
		\[
		g(x) = (1+o(1)) \sum_{\gamma: |\gamma|\leq \|x\|^{2+\delta}} s(\gamma).
		\]
		Let $R=\|w\|^{1-\epsilon}$ and let $y\in \partial\ball{0}{R}$.  Then we have by the Markov property
		\begin{align*}
			\sum_{\substack{\gamma: y\to w\\ \gamma\cap \ball{0}{R^{1-\delta}}\neq \emptyset}}  s(\gamma) \lesssim \frac{1}{R^{\delta(d-2)}}\cdot  g(w). 
		\end{align*}
		For all $\gamma:y\to w$ such that $\gamma\cap \ball{0}{R^{1-\delta}}= \emptyset$, i.e.\ $\|\gamma(i)\|\geq R^{1-\delta}$ for all $i$, and $|\gamma|\leq \|w\|^{2\delta}$ we have using also~\eqref{eq:unionboudn}
			\begin{align*}
s(\gamma)	\cdot \pr{\cF_{\gamma}\cap A\neq \emptyset} &\leq s(\gamma) \cdot \left( 1-\left(1-\frac{c|A|}{R^{(1-\delta)(d-2)}}\right)^{\|w\|^{2+\delta}} \right) \\
			&\asymp s(\gamma) \cdot |A| \cdot \frac{\|w\|^{2+\delta}}{R^{(1-\delta)(d-2)}} = s(\gamma) \cdot |A|\cdot \frac{1}{\|w\|^{(1-\delta)(1-\epsilon)(d-2)-2-\delta}},
		\end{align*}
		where $c$ is a positive constant. 
		Taking the sum over all paths $\gamma$ such that $\gamma\cap \ball{0}{R^{1-\delta}}= \emptyset$ and  $|\gamma|\leq \|w\|^{2\delta}$ we get an upper bound of 
		\[
		g(w) \cdot |A| \cdot\frac{1}{\|w\|^{(1-\delta)(1-\epsilon)(d-2)-2-\delta}}.
		\]
		Therefore, overall we obtain for every $y\in \partial \ball{0}{R}$
		\begin{align*}
			\frac{1}{g(w)}\cdot \sum_{\gamma: y\to w}	s(\gamma)\cdot \pr{ \cF_{\gamma}\cap A\neq \emptyset}  \lesssim \frac{1}{R^{\delta(d-2)}}  +|A|\cdot  \frac{1}{\|w\|^{(1-\delta)(1-\epsilon)(d-2)-2-\delta}}.
		\end{align*}
		Taking $\delta$ sufficiently small so that the exponent above is strictly positive concludes the proof.
	\end{proof}

	\begin{lemma}\label{lem:hitfinitetoinfinite}
Let $A$ and $B$ be two finite sets in $\Z^d$. Then we have
		\[
		\lim_{\|w\|\to\infty} \frac{\pr{\cT_c^w\cap A\neq \emptyset, \cT_c^w\cap B\neq \emptyset}}{g(w)} = \sum_{x\in B} \pr{\cT^x\cap A\neq \emptyset, \cT_-^x\cap B=\emptyset}.
		\]
	\end{lemma}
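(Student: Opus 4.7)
The argument follows the strategy used by Zhu~\cite{Zhu} to prove the identity $\textrm{BCap}(B)=\sum_{x\in B}\pr{\cT_-^x\cap B=\emptyset}$: pick a canonical $B$-vertex of $\cT_c^w$, decompose along the ancestral path from the root to that vertex, and then use rerooting to convert, in the limit $\|w\|\to\infty$, the finite tree $\cT_c^w$ into the invariant infinite tree $\cT^x$. The additional event $\{\cT_c^w\cap A\neq\emptyset\}$ is carried through the correspondence.

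\textbf{Canonical decomposition.}
Fix a planar depth-first order on $\cT_c$ and let $u^*$ be the \emph{first} vertex in this order with $S_{u^*}\in B$, when it exists. The events $\cE_x:=\{u^*\text{ exists and }S_{u^*}=x\}$ for $x\in B$ partition $\{\cT_c^w\cap B\neq\emptyset\}$, so
\[
\pr{\cT_c^w\cap A\neq\emptyset,\,\cT_c^w\cap B\neq\emptyset}=\sum_{x\in B}\pr{\cE_x,\,\cT_c^w\cap A\neq\emptyset}.
\]
On $\cE_x$, the ancestral path in $\cT_c$ from the root (at position $w$) to $u^*$ induces a nearest-neighbour walk $\gamma:w\to x$ in $\Z^d$, and the DFS-minimality of $u^*$ is equivalent to the two constraints that (i) $\gamma(i)\notin B$ for all $1\le i\le|\gamma|-1$, and (ii) none of the subtrees attached to the \emph{left} of this ancestral line hits $B$. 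Conditional on $\gamma$ being the backbone, the BGW branching makes the rest of $\cT_c$ an independent copy of the tree of Definition~\ref{def:fgamma} augmented with an independent critical BGW subtree rooted at $x$, whence
\[
\pr{\cE_x,\,\cT_c^w\cap A\neq\emptyset}=\sum_{\gamma:w\to x}s(\gamma)\,Q_\gamma(A,B),
\]
where $Q_\gamma(A,B)$ denotes the probability that this enriched $\cF_\gamma$-structure simultaneously satisfies (i), (ii), and has its BRW range intersecting $A$.

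\textbf{Rerooting and passage to the infinite tree.}
Read from $x$ backwards, the reversed backbone together with its adjoint-tree decoration and the BGW subtree at $x$ matches, in distribution, the first $|\gamma|$ spine levels of a copy of $\cT^x$: the reversed line is the initial segment of the spine, the left (resp.\ right) adjoint trees of Definition~\ref{def:fgamma} are the left-of-spine (resp.\ right-of-spine) normal children of the spine and their descendants, and the BGW subtree at $x$ is the tree hanging from the normal children of the root. Under this identification the conjunction of (i) and (ii) is precisely $\{\cT_-^x\cap B=\emptyset\}$ (since by definition $\cT_-^x$ is the union of the spine and the left-of-spine subtrees), while the event that the enriched $\cF_\gamma$-structure hits $A$ becomes $\{\cT^x\cap A\neq\emptyset\}$ truncated to the first $|\gamma|$ spine levels. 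Combining with $\sum_{\gamma:w\to x}s(\gamma)=g(x-w)=(1+o(1))\,g(w)$, this would yield
\[
\lim_{\|w\|\to\infty}\frac{\pr{\cE_x,\,\cT_c^w\cap A\neq\emptyset}}{g(w)}=\pr{\cT^x\cap A\neq\emptyset,\,\cT_-^x\cap B=\emptyset},
\]
and summing over $x\in B$ concludes.

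\textbf{Main difficulty.}
The delicate step is to justify the above approximation uniformly in $\gamma$, so that it survives division by $g(w)$. One must first discard atypical backbones (short ones, ones longer than $\|w\|^{2+\delta}$, or ones whose early portion stays in the ball of radius $\|w\|^{1-\delta}$ around $0$) via standard SRW hitting estimates, and then, on the remaining paths, bound the discrepancy between the finite enriched $\cF_\gamma$-structure and $\cT^x$ in the events tied to the fixed finite sets $A$ and $B$. This latter step is precisely what Claim~\ref{cl:fromzhu} provides: it gives the $o(g(w))$ bound showing that adjoint trees attached to $\gamma$ at vertices far from $A\cup B$ cannot spuriously hit those sets, which is exactly what is needed to replace the truncated tree by $\cT^x$ in the limit.
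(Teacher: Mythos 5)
Your proposal follows essentially the same route as the paper: decompose on the first (DFS-first) vertex of $\cT_c^w$ mapped into $B$, extract the backbone $\gamma:w\to x$ carrying weight $s(\gamma)$, observe that the DFS-minimality event is exactly $\cL_{\gamma,-}\cap B=\emptyset$ (your (i)+(ii)), reverse the backbone using $s(\gamma)=s(\stackrel{\leftarrow}{\gamma})$, recognize the reversed decorated backbone as a spine segment of $\cT^x$ (the offspring distribution $\pr{\ell,r}=\mu(\ell+r+1)$ on the backbone matches the spine, and the critical subtree at $u^*$ matches the normal descendants of the root of $\cT$), and pass to the limit. The identifications you carry out are exactly those of the paper, so the structural part of the argument is right.

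One technical point is somewhat misattributed. To go from the truncated object to the full $\cT^x$, \emph{two} ingredients are needed, not one. The paper splits $\gamma$ at its first visit to $\partial B(0,R)$ with $R=\|w\|^{1-\epsilon}$, so that $\sum_{\gamma_2:y\to w}s(\gamma_2)=(1+o(1))g(w)$ factors out; Claim~\ref{cl:fromzhu} then kills the contribution of the adjoint trees hanging on the far leg $\gamma_2$ (the finite-tree side). But one must separately show that the portion of $\cT^x$ \emph{beyond} $\partial B(0,R)$ also contributes $o(1)$, i.e.\ that
\[
\lim_{\|w\|\to\infty}\sup_{y\in\partial B(0,R)}\pr{\cT^y\cap(A\cup B)\neq\emptyset}=0,
\]
which is~\eqref{eq:BCap4} in the paper and comes from the quantitative bound $\pr{\cT^y\cap(A\cup B)\neq\emptyset}\lesssim \bcap{A\cup B}/R^{d-4}$, not from Claim~\ref{cl:fromzhu}. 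Your text ascribes both halves of the truncation to Claim~\ref{cl:fromzhu}, and your phrasing ``combining with $\sum_{\gamma:w\to x}s(\gamma)=g(x-w)$'' suggests pulling the full Green's function out of a sum whose summands depend on $\gamma$, which is why the split at $\partial B(0,R)$ is needed in the first place. These are fixable gaps in the write-up rather than errors of strategy; adding the splitting at $\partial B(0,R)$ and the estimate~\eqref{eq:BCap4} closes them, and then your argument coincides with the paper's.
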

\begin{remark}\rm{Note that specifying the result of the lemma to the case when $A= B$, we recover~\eqref{BCaphit}. } 
\end{remark}	
	\begin{proof}[\bf Proof of Lemma~\ref{lem:hitfinitetoinfinite}]
		
		We say that the walk indexed by $\mathcal T_c$ hits $B$ through a finite path $\gamma:\{0,\dots,N\} \to \mathbb Z^d$ and write $\Gamma=\gamma$, if the restriction 
		of the walk to the geodesic $\mathfrak g$ from the root to the first vertex (in the lexicographical order) at which the walk hits $B$ is $\gamma$ (here $N$ is the distance to the root from this vertex).	 Then by definition, the restriction of the walk to the set of trees to the left of this geodesic (excluding the last point) does not hit $B$. Let us denote by $\cL_{\gamma,-}$ the range of the walk indexed by this set of trees, including the geodesic $\mathfrak g$, but excluding its final point. We let $\cR_{\gamma}$ be the range of the walk indexed by all the trees attached to the vertices on the geodesic. We then have 
		\begin{align*}
			\pr{\cT_c^w\cap A\neq \emptyset, \cT_c^w\cap B\neq \emptyset}& = \sum_{x\in B} \sum_{\gamma: w\to x} \pr{\Gamma=\gamma, \cL_{\gamma,-}\cap B=\emptyset, \cR_\gamma\cap A\neq \emptyset}\\&=  \sum_{x\in z+B} \sum_{\gamma: w\to x} s(\gamma)\cdot \pr{ \cL_{\gamma,-}\cap B=\emptyset, \cR_\gamma\cap A\neq \emptyset}
		\end{align*}
		We recall that if $v$ is a vertex on the geodesic $\mathfrak g$ (excluding its last vertex), then its number of offspring is distributed according to $\mu(\cdot +1)$. Moreover, if $\ell_v$ (resp.\ $r_v$) is its number of offspring to the left (resp.\ right) of $\mathfrak g$, then for all $\ell, r\geq 0$		\begin{align}\label{eq:distrofoffs}
			\pr{\ell_v=\ell, r_v=r} = \mu(\ell+r+1).
		\end{align}
		Writing $\stackrel{\leftarrow}{\gamma}$ for the reversal of the path $\gamma$, i.e.\ $\stackrel{\leftarrow}{\gamma}=(\gamma(N-i))_{0\leq i \leq N}$, and recalling Definition~\ref{def:fgamma} we see that 
				\[
		\pr{ \cL_{\gamma,-}\cap B=\emptyset, \cR_\gamma\cap A\neq \emptyset} = \pr{ \cF_{\stackrel{\leftarrow}{\gamma},-}\cap B=\emptyset, \cF_{\stackrel{\leftarrow}{\gamma}}\cap A\neq \emptyset}.	
		\]
Using this and the reversibility of simple random walk on $\Z^d$ we obtain 
 	\begin{align*}
 		\sum_{\gamma:w\to x} s(\gamma) \cdot \pr{ \cL_{\gamma,-}\cap B=\emptyset, \cR_\gamma\cap A\neq \emptyset} &= \sum_{\gamma:w\to x} s(\stackrel{\leftarrow}{\gamma}) \cdot  \pr{ \cF_{\stackrel{\leftarrow}{\gamma},-}\cap B=\emptyset, \cF_{\stackrel{\leftarrow}{\gamma}}\cap A\neq \emptyset} \\ 
 		&= \sum_{\gamma:x\to w} s(\gamma) \cdot \pr{ \cF_{{\gamma},-}\cap B=\emptyset, \cF_{{\gamma}}\cap A\neq \emptyset}. 
  	\end{align*}
		Let $R=\|w\|^{1-\epsilon}$ with $\epsilon<1/3$ as in Claim~\ref{cl:fromzhu}. Take $\|w\|$ sufficiently large so that the ball $\ball{0}{R}$ contains both $A$ and $B$. By considering the first time the path $\gamma$ hits $\ball{0}{R}$ we get for $x\in B$
		\begin{align*}
			&\sum_{\gamma:x\to w} s(\gamma) \cdot \pr{ \cF_{{\gamma},-}\cap B=\emptyset, \cF_{{\gamma}}\cap A\neq \emptyset}\\
			&= \sum_{y\in \partial\ball{0}{R}} 
			\sum_{\substack{\gamma_1:x\to y\\ 				\gamma_1\setminus \{y\}\cap \partial\ball{0}{R}=\emptyset}} \sum_{\gamma_2:y\to w} s(\gamma_1)\cdot s(\gamma_2)\cdot \pr{( \cF_{\gamma_1,-}\cup \cF_{\gamma_2,-})\cap B=\emptyset,  (\cF_{\gamma_1}
				\cup \cF_{\gamma_2}) \cap A\neq \emptyset}.
		\end{align*}
	Using Claim~\ref{cl:fromzhu} and the fact that $\sum_{\gamma:y\to w} s(\gamma)=g(y-w)=g(w)(1+o(1))$ for $y\in \partial\ball{0}{R}$ we obtain from the above
	\begin{align*}
&\lim_{\|w\|\to\infty} 	\frac{1}{g(w)}\sum_{\gamma:x\to w} s(\gamma)  \cdot \pr{ \cF_{{\gamma},-}\cap B=\emptyset, \cF_{{\gamma}}\cap A\neq \emptyset} \\
&=\lim_{\|w\|\to\infty} \sum_{y\in \partial\ball{0}{R}} \sum_{\substack{\gamma_1:x\to y\\ 
		\gamma_1\setminus \{y\}\cap \partial\ball{0}{R}=\emptyset}} s(\gamma_1)\cdot \pr{ \cF_{\gamma_1,-}
	\cap B=\emptyset,  \cF_{\gamma_1}	\cap A\neq \emptyset}. 
	\end{align*}
	Therefore, so far we have established that 
		\begin{align}\label{eq:equalityforcritical}
			\nonumber&\lim_{\|w\|\to\infty} \frac{\pr{\cT_c^w\cap A\neq \emptyset, \cT_c^w\cap B\neq \emptyset}}{g(w)} \\ &= \lim_{\|w\|\to\infty}\sum_{\substack{x\in B\\y\in \partial\ball{0}{R}}}\sum_{\substack{\gamma:x\to y\\ 
					\gamma\setminus \{y\}\cap \partial\ball{0}{R}=\emptyset}} s(\gamma)\cdot \pr{ \cF_{\gamma}
					\cap A\neq \emptyset, \cF_{\gamma,-}
				\cap B=\emptyset}. 
		\end{align}
		To conclude the proof it suffices to show that 
		\begin{align}\label{eq:finalgoal}\begin{split}
			\lim_{\|w\|\to\infty}\sum_{\substack{x\in B\\y\in \partial\ball{0}{R}}}\sum_{\substack{\gamma:x\to y\\ 
					\gamma\setminus \{y\}\cap \partial\ball{0}{R}=\emptyset}} s(\gamma)\cdot \pr{ \cF_{\gamma}
					\cap A\neq \emptyset, \cF_{\gamma,-}
				\cap B=\emptyset}\\
				 = \sum_{x\in B} \pr{\cT^x\cap A\neq \emptyset, \cT^x_-\cap B=\emptyset}.
\end{split}
			\end{align}
		Let $X$ be the simple random walk performed by the spine of $\cT$. 
		By considering the first visit to~$\partial \ball{0}{R}$ by $X$ denoted by $\tau_{\partial\ball{0}{R}}$  and recalling Definition~\ref{def:fgamma} we get for $x\in B$
		\begin{align*}
			&\pr{\cT^x\cap A\neq \emptyset, \cT^x_-\cap B=\emptyset} \\
			&= \sum_{y\in \partial \ball{0}{R}}\sum_{\gamma:x\to y} \prstart{X[0,\tau_{\partial \ball{0}{R}}]=\gamma}{x} \pr{(\cF_{\gamma,-}\cup\cT_-^y)\cap B=\emptyset, (\cF_{\gamma}\cup \cT^y)\cap A\neq \emptyset},
				\end{align*}
			where $\cT^y$ (resp.\ $\cT^y_-$) denotes the range of the branching random walk (resp.\ the past) after the spine reaches $y$ for the first time. 
			
		Now it is also known from~\cite{Zhu} that for some constant $C>0$, for all $R$ large enough, one has 
		$$\sup_{y\in \partial B(0,R)} \mathbb P(\mathcal T^y \cap (A \cup B)\neq \emptyset) \le C \cdot \frac{\textrm{BCap}(A \cup B)}{R^{d-4}},$$
		which implies that given $A,B\subset \mathbb Z^d$ (recall that $d\ge 5$), 
		\begin{equation}\label{eq:BCap4}
			\lim_{\|w\|\to \infty} \sup_{y\in \partial B(0,R)} \mathbb P(\mathcal T^y \cap (A \cup B)\neq \emptyset) = 0. 
		\end{equation}
		Therefore, we conclude 
		\begin{align*}
			&\pr{\cT^x\cap A\neq \emptyset, \cT^x_-\cap B=\emptyset} \\
			&=\lim_{\|w\|\to\infty}\sum_{y\in \partial \ball{0}{R}}\sum_{\gamma:x\to y} \prstart{X[0,\tau_{\partial \ball{0}{R}}]=\gamma}{x} \pr{\cF_{\gamma,-} \cap B=\emptyset, \cF_{\gamma}\cap A\neq \emptyset}.	
		\end{align*}
		Finally, taking the sum over all $x\in z+B$ proves~\eqref{eq:finalgoal}. 
	\end{proof}

	\begin{lemma}\label{lem:inftobcap}
		We have that 
		\[
		\lim_{\|z\|\to\infty} \frac{1}{G(z)}\cdot \sum_{x\in z+B} \pr{\cT^x\cap A\neq \emptyset, \cT_-^x\cap (z+B)= \emptyset} = 2\cdot \bcap{A}\cdot \bcap{B}.
		\]
	\end{lemma}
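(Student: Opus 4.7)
The plan is to split the tree as $\cT=\cT_-\sqcup\cT_+$, where $\cT_+:=\cT\setminus\cT_-$ consists of the root, the root's non-spine subtrees, and the subtrees attached to the right of the spine. Using $\{\cT^x\cap A\neq\emptyset\}=\{\cT_-^x\cap A\neq\emptyset\}\cup\{\cT_+^x\cap A\neq\emptyset\}$, inclusion-exclusion writes the sum in the lemma as $T_1(z)+T_2(z)-E(z)$, with
\[
T_1(z)=\sum_{x\in z+B}\pr{\cT_-^x\cap A\neq\emptyset,\,\cT_-^x\cap(z+B)=\emptyset},
\]
\[
T_2(z)=\sum_{x\in z+B}\pr{\cT_+^x\cap A\neq\emptyset,\,\cT_-^x\cap(z+B)=\emptyset}.
\]
Conditionally on the spine, the left- and right-detours are independent to leading order (their count pair $(\ell_i,r_i)$ at each spine vertex has joint law $\mu(\ell+r+1)$, which factorizes up to second-order corrections), and by the left-right symmetry of $\cT$ together with Zhu's asymptotic one has $\pr{\cT_-^x\cap A\neq\emptyset}=O(G(z))$ and $\pr{\cT_+^x\cap A\neq\emptyset}=O(G(z))$ uniformly in $x\in z+B$. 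Hence $E(z)=O(|B|\,G(z)^2)=o(G(z))$, and it suffices to analyze $T_1$ and $T_2$.

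\textbf{Two-scale factorization for $T_1$.}
I would pick an intermediate scale $R=R(z)\to\infty$ with $R/\|z\|\to 0$ (say $R=\|z\|^{1/2}$), and let $\tau_R$ be the first exit of the spine of $\cT^x$ from $\ball{x}{R}$. For $T_1$, only the post-$\tau_R$ portion of $\cT_-^x$ can reach $A$: the pre-$\tau_R$ spine stays in $\ball{x}{R}$ (at distance $\gtrsim \|z\|-R$ from $A$), and a union bound shows the attached left-detours contribute only $O(R^2 g(z))=o(G(z))$ to the probability of hitting $A$. Conversely, only the pre-$\tau_R$ portion matters for avoiding $z+B\subset\ball{x}{R}$, since by transience the post-$\tau_R$ portion avoids $z+B$ with probability $1-o_R(1)$. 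By the strong Markov property the post-$\tau_R$ portion is a past tree started from $X_{\tau_R}\in\partial\ball{x}{R}$, and Zhu's asymptotic $\pr{\cT_-^u\cap A\neq\emptyset}=(1+o(1))\textrm{BCap}(A)G(u)$ applies uniformly over such $u$, since $\|u\|\sim\|z\|$ and $G(u)/G(z)\to 1$. Taking $R\to\infty$ after $\|z\|\to\infty$, the pre-$\tau_R$ avoidance probability converges monotonically to $\pr{\cT_-^x\cap(z+B)=\emptyset}$, so summing and applying Zhu's identity $\sum_{y\in B}\pr{\cT_-^y\cap B=\emptyset}=\textrm{BCap}(B)$ yields $T_1(z)=(1+o(1))\textrm{BCap}(A)\,\textrm{BCap}(B)\,G(z)$.

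\textbf{Left-right symmetry for $T_2$.}
For $T_2$ the same stopping time $\tau_R$ decouples the two events: $\{\cT_-^x\cap(z+B)=\emptyset\}$ is controlled by the pre-$\tau_R$ portion of $\cT_-^x$ and contributes the same factor $\pr{\cT_-^x\cap(z+B)=\emptyset}+o(1)$ as in $T_1$, while $\{\cT_+^x\cap A\neq\emptyset\}$ is, up to an $O(g(z))=o(G(z))$ contribution from the root's own finite $\cT_c$-subtrees, realized by some right-detour attached to the spine after $\tau_R$. Conditionally on the spine, these right-detours are independent of the left-detours to leading order, and by the construction of $\cT$ they have the same conditional distribution as the corresponding left-detours; hence by the same Zhu asymptotic the post-$\tau_R$ right-detour contribution equals $(1+o(1))\textrm{BCap}(A)\,G(z)$, matching the post-$\tau_R$ left-detour contribution in $T_1$. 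Summing gives $T_2(z)=(1+o(1))\textrm{BCap}(A)\,\textrm{BCap}(B)\,G(z)$, and $T_1+T_2-E=2(1+o(1))\textrm{BCap}(A)\,\textrm{BCap}(B)\,G(z)$ is the lemma.

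\textbf{Main obstacle.}
The heart of the argument is making the two-scale factorization at level $R$ rigorous with all remainders genuinely $o(G(z))$ rather than $o(g(z))$. This requires a uniform form of Zhu's asymptotic over the exit annulus $\partial\ball{x}{R}$, a quantitative bound on the correlation between left- and right-detour counts $(\ell_i,r_i)$ showing it is a subleading effect, and tail estimates on short spine excursions in the spirit of Claim~\ref{cl:fromzhu}, so that the pre- and post-$\tau_R$ contributions genuinely decouple at leading order.
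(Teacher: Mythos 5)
The skeleton of your argument matches the paper's (condition on the spine of $\cT^x$ up to an exit time from a mesoscopic ball, then factor the inner avoidance of $z+B$ from the outer hitting of $A$), with the factor $2$ obtained via the left/right split $\cT=\cT_-\sqcup\cT_+$ rather than via the paper's auxiliary Lemma~\ref{claim.pastfuturehit}, and with a $z$-dependent scale $R=\|z\|^{1/2}$ in place of a fixed $r$ with $r\to\infty$ taken last; those are harmless repackagings. But the bound $E(z)=O(|B|\,G(z)^2)$ is a genuine gap. Conditional independence of the left and right detours given the spine does not give a product bound on the joint probability, because the conditional hitting probabilities $\pr{\cT_\pm^x\cap A\neq\emptyset\mid\text{spine}}$ are both functionals of the \emph{same} spine trajectory and are strongly positively correlated. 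Concretely, the spine comes within distance $O(1)$ of $A$ with probability $\asymp g(z)\asymp\|z\|^{2-d}$, and on that event both conditional probabilities are $\Theta(1)$, so $\pr{\cT_-^x\cap A\neq\emptyset,\ \cT_+^x\cap A\neq\emptyset}\gtrsim\|z\|^{2-d}$, which already exceeds $G(z)^2\asymp\|z\|^{8-2d}$ for every $d\geq 7$. The claimed $O(G(z)^2)$ is therefore false in general; what is true, and suffices, is $E(z)=o(G(z))$, but this requires a second-moment argument rather than a first-moment product.

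The correct estimate, which the paper carries out in the proof of Lemma~\ref{claim.pastfuturehit}, sums over the position $x'$ of the spine at the most recent common ancestor of the two hitting vertices and yields $\sum_{x'} g(x'-z)\big(G(x')g(x')+g(x')^2\big)\lesssim \|z\|^{3-d}=o(G(z))$. The same caution applies to the decoupling you invoke inside $T_1$ and $T_2$: asserting that the pre- and post-$\tau_R$ contributions ``decouple at leading order'' again requires a quantitative cross-term bound over the spine's randomness, which is precisely what the paper isolates as Lemma~\ref{lem:BCap2} and proves by the analogous second-moment computation. Once those two estimates are available, the rest of your plan (Zhu's asymptotic on the outer tree, the identity $\sum_{b\in B}\pr{\cT_-^b\cap B=\emptyset}=\bcap{B}$ on the inner avoidance, and left/right symmetry for the factor $2$) does reproduce the paper's argument.
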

	
	Before proceeding to the proof, we state two results that we prove afterwards.

		\begin{lemma}\label{lem:BCap2}
		Let $A$ and $B$ be finite subsets of $\mathbb Z^d$. Then 
		$$\lim_{r\to \infty}  \limsup_{\|z\| \to \infty}  \sup_{y\in \partial B(z,r)} \frac{\mathbb P(\mathcal T^y \cap A \neq \emptyset, \mathcal T^y_-\cap (z+B)  \neq \emptyset)}{G(z)} = 0 . $$ 
	\end{lemma}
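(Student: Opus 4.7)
The goal is to establish the quantitative bound
\[
\sup_{y\in\partial B(z,r)}\frac{\mathbb P\bigl(\mathcal T^y\cap A\neq\emptyset,\ \mathcal T^y_-\cap(z+B)\neq\emptyset\bigr)}{G(z)}\ \le\ \varepsilon_{\|z\|}(r)\ +\ \frac{C(A,B)}{r^{d-4}},
\]
where $\varepsilon_{\|z\|}(r)\to 0$ as $\|z\|\to\infty$ for each fixed $r$. Since $d\ge 5$, the right-hand side vanishes in the iterated limit $\lim_{r\to\infty}\limsup_{\|z\|\to\infty}$, which is the claim.

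Writing $E_A=\{\mathcal T^y\cap A\neq\emptyset\}$ and $E_B=\{\mathcal T^y_-\cap(z+B)\neq\emptyset\}$, the first-moment inequality applied to the (random) number of past vertices hitting $z+B$ gives
\[
\mathbb P(E_A\cap E_B)\ \le\ \sum_{b\in z+B}\sum_v\mathbb P\bigl(v\in\mathcal T_-,\ S_v=b,\ E_A\bigr),
\]
with the inner sum over vertex labels $v$ of $\mathcal T$. Each instance of $E_A$ is further split according to whether the vertex of $\mathcal T^y$ realising the hit of $A$ is a proper descendant of $v$ or not.

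\textbf{Descendant part.} By the branching property, conditional on the ancestral structure from the root to $v$, the subtree rooted at $v$ is independent and starts at $S_v=b$; it is distributed as a critical Bienaym\'e--Galton--Watson tree when $v$ is a normal vertex in a left adjoint tree, and as a shifted copy of $\mathcal T$ when $v$ lies on the spine. Since $\|b-a\|=\Theta(\|z\|)$ for every $a\in A$, standard branching capacity estimates bound the corresponding hitting probabilities by $C\cdot\textrm{BCap}(A)\cdot g(z)$ in the critical BGW case and by $C\cdot\textrm{BCap}(A)\cdot G(z)$ in the spine case. Using the first-moment identities $\sum_v\mathbb P(v\text{ is normal and in the past},S_v=b)\le G(b-y)\le Cr^{4-d}$ and $\sum_v\mathbb P(v\text{ on the spine},S_v=b)=g(b-y)\le Cr^{2-d}$ (valid for $y\in\partial B(z,r)$, $b\in z+B$ and $r$ sufficiently large), summing over $b$ shows that the descendant contribution is at most $C\cdot G(z)\cdot(r^{2-d}+r^{4-d}\|z\|^{-2})$, which is absorbed into the two terms of the advertised bound.

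\textbf{Non-descendant part.} Condition further on the tree path from the root to $v$ and on the walk values along it. The non-descendant portion of $\mathcal T^y$ is then a random forest of independent critical BGW and adjoint subtrees rooted at positions along this path, together with the root's sibling subtrees, the spine continuation beyond the relevant spine vertex, and the adjoint subtrees hanging off the spine prior to $v$'s entry. A union bound of branching capacity hitting probabilities followed by summation over $v$ and $b\in z+B$ reorganises the estimate into a convolution of Green's functions on $\mathbb Z^d$ of the form $\sum_x g(x-y)g(x-a)G(b-x)$; in the regime where $y,b$ both lie in a neighbourhood of $z$ while $a\in A$ lies near the origin, splitting this sum into a neighbourhood of the origin (where $g(\cdot-a)$ is singular) and a neighbourhood of $z$ (where the remaining factors are singular) and estimating each piece directly shows the convolution is $o(G(z))$, contributing only to $\varepsilon_{\|z\|}(r)$. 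The main obstacle is precisely this convolution estimate: it is tight only because $d\ge 5$ renders the singularities at the origin and at $z$ integrable, and one must carefully bookkeep which subtrees in the ancestral line contribute at each scale so as to avoid double counting when summing over $v$.
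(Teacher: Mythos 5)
Your decomposition---a first-moment bound on the $z+B$-hit followed by a descendant/non-descendant split for the $A$-hit---is genuinely different from the paper's. There, one labels the spine vertices and decomposes on the pair $(I,J)$ of first spine indices whose attached adjoint trees reach $A$ (resp.\ $z+B$); the cases $I<J$, $I>J$, $I=J$ are handled separately (the last with a second-moment argument) and everything reduces to the convolution estimate
\[
\sum_{x}g(x-y)\bigl(g(x)G(z-x)+G(x)g(z-x)\bigr)\le C\,\frac{G(z)}{r^{d-4}}.
\]
Your descendant analysis is essentially sound (note that the descendant subtree of a spine vertex is \emph{not} literally a translate of $\mathcal T$, since the root offspring law differs from the spine offspring law $\musb$; its hitting probability is still $\Theta(G)$, so this is harmless).

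The non-descendant part, however, has a genuine gap. You correctly observe in words that the forest of non-descendant subtrees includes the \emph{infinite} spine continuation past the last spine vertex on the path to $v$. That infinite subtree started at $x$ hits $A$ with probability of order $G(x-a)$, not $g(x-a)$. But the single convolution you write, $\sum_x g(x-y)g(x-a)G(b-x)$, reflects only the finite adjoint trees. The contribution you have omitted reads $\sum_x g(x-y)G(x-a)g(b-x)$, with $g$ and $G$ swapped in the middle factors, and this swap is not cosmetic: for $y\in\partial B(z,r)$ and $b\in z+B$ the mass concentrates near $x\approx z$, where $G(x-a)\approx G(z)$ and $\sum_x g(x-y)g(b-x)\approx (g*g)(y-b)\asymp r^{4-d}$, so this term is $\Theta\bigl(G(z)\cdot r^{4-d}\bigr)$. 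That is precisely the paper's $I>J$ case and the source of the $C/r^{d-4}$ in the statement. It is \emph{not} $o(G(z))$ at fixed $r$, contradicting your claim that the non-descendant convolution feeds only into $\varepsilon_{\|z\|}(r)$. Indeed, your convolution and your descendant bound $G(z)\bigl(r^{2-d}+r^{4-d}\|z\|^{-2}\bigr)$ are both strictly smaller than $G(z)r^{4-d}$, so if they were the whole story you would be proving a strictly stronger inequality than the true one (the two-sided heuristic---reach $z+B$ at cost $\asymp r^{4-d}$ from $y$, then let the spine continuation reach $A$ at cost $\asymp G(z)$---shows $G(z)/r^{d-4}$ is the correct order). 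The advertised bound is right, but the term that produces the $r^{-(d-4)}$ is exactly the one you dropped, so as written the proof is incomplete.

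A secondary, smaller point: the paper needs a separate second-moment argument for the diagonal case $I=J$ (both hits through adjoint trees at the same spine vertex), leading to the term $\mathbb P(\widehat{\mathcal T}_c^x\cap A\neq\emptyset,\widetilde{\mathcal T}_c^x\cap(z+B)\neq\emptyset)$; your descendant/non-descendant split does not isolate this case and the necessary second-moment control on a single adjoint tree is not spelled out in your union bound.
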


		\begin{lemma}\label{claim.pastfuturehit} Let $A$ be a finite subset of $\mathbb Z^d$. Then 
		$$\lim_{\|z\|\to \infty} \frac{\mathbb P(\mathcal T^z \cap A \neq \emptyset) }{G(z)}  =2\cdot \bcap{A}. $$
	\end{lemma}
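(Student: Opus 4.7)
The plan is to decompose the range $\mathcal T^z$ via the spine. Write $\mathcal T = \mathcal T_- \sqcup \mathcal T_+$ where $\mathcal T_+ := \mathcal T\setminus\mathcal T_-$ consists of the root, its normal subtrees, and all right adjoint trees hanging on the spine; let $\mathcal T^z_+$ denote its associated range started from $z$. Inclusion-exclusion then gives
\[
\pr{\mathcal T^z \cap A \neq \emptyset} = \pr{\mathcal T^z_- \cap A \neq \emptyset} + \pr{\mathcal T^z_+ \cap A \neq \emptyset} - \pr{\mathcal T^z_- \cap A \neq \emptyset,\, \mathcal T^z_+ \cap A \neq \emptyset}.
\]
The first term, divided by $G(z)$, tends to $\bcap{A}$ by the definition of $G$ recalled in Section~\ref{sec.branchingcap}. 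The second term similarly yields $\bcap{A}$ in the limit: by the symmetry $\pr{\ell_v = \ell, r_v = r} = \mu(\ell+r+1)$ at each spine vertex $v$, the collection of right adjoint trees has the same joint law as the left ones, so the range of the right adjoint trees is equidistributed with $\mathcal T^z_-$ minus its spine. Since the spine is a simple random walk hitting $A$ with probability $O(g(z))$, and the root's normal subtrees (a collection of $\mathcal T_c$-trees rooted at $z$) also hit $A$ with probability $O(g(z))$, both discrepancies are $o(G(z))$.

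The main obstacle is the cross term, which must be shown to be $o(G(z))$. Conditioning on the spine walk $(Y_n)_{n\ge 0}$ and on the offspring counts $(\ell_n, r_n)_{n \ge 0}$, the left and right adjoint trees are independent. A union bound then gives, with $q_A(x) := \pr{\widetilde{\mathcal T}_c^x \cap A \neq \emptyset}$,
\[
\pr{\mathcal T^z_- \cap A \neq \emptyset,\, \mathcal T^z_+ \cap A \neq \emptyset \mid (Y_n, \ell_n, r_n)} \leq \Bigl(\sum_n \ell_n q_A(Y_n)\Bigr)\Bigl(\sum_n r_n q_A(Y_n)\Bigr) + \varepsilon_z,
\]
with $\varepsilon_z = O(g(z))$ absorbing the spine and root contributions. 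Taking expectation and using independence across spine vertices (so that $(\ell_n, r_n)$ and $(\ell_m, r_m)$ are independent for $n\neq m$) together with the Markov property of the SRW along the spine reduces the off-diagonal part of the right-hand side to a double convolution of the form $\sum_x g(z-x)\, q_A(x) \cdot \sum_y g(x-y)\, q_A(y)$. Using $q_A(x)\asymp\|x\|^{2-d}$ as $\|x\|\to\infty$, a direct asymptotic estimate shows that this convolution is of order $\|z\|^{8-2d}$ (with a logarithmic correction when $d=6$), hence $o(G(z)) = o(\|z\|^{4-d})$ for every $d\ge 5$. The diagonal $n=m$ contribution is similarly bounded by $\sum_x g(z-x) q_A(x)^2 = O(\|z\|^{6-2d}) = o(G(z))$.

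A more conceptual alternative for the cross term is to stop the spine at its first hitting time $\tau$ of a large ball $B(0,R)\supset A$ and apply Lemma~\ref{lem:BCap2} to the tail of $\mathcal T$ started from $Y_\tau$: the pre-$\tau$ adjoint trees hit $A$ with probability that can be made negligible in units of $G(z)$ by first choosing $R$ large (the spine up to time $\tau$ stays outside $B(0,R)$), while the post-$\tau$ event --- that the remaining past hits $A$ and the remaining right side hits $A$ --- fits directly into the framework of Lemma~\ref{lem:BCap2} with the second set a neighbourhood of $A$. Either route yields that the cross term is $o(G(z))$, and combining the three asymptotics gives the claimed limit $2\cdot\bcap{A}$.
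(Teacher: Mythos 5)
Your proof is correct and follows essentially the same route as the paper: decompose $\mathcal{T} = \mathcal{T}_- \sqcup \mathcal{T}_+$, use the left--right symmetry of the spine offspring law to see that each half contributes $\bcap{A}$ in the limit (the spine and the root's critical subtrees only contribute $O(g(z)) = o(G(z))$), and reduce to showing the cross term $\pr{\mathcal{T}^z_-\cap A\neq\emptyset,\,\mathcal{T}^z_+\cap A\neq\emptyset}$ is $o(G(z))$; the paper bounds this cross term via the most-recent-common-ancestor decomposition along the spine reused from Lemma~\ref{lem:BCap2}, which is exactly your ``more conceptual alternative,'' while your primary route (condition on the spine and offspring counts, union bound, Markov property) lands on the same convolution. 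One small correction: for $d>6$ the sums $\sum_x g(z-x)q_A(x)\sum_y g(x-y)q_A(y)$ and $\sum_x g(z-x)q_A(x)^2$ are each of order $\|z\|^{2-d}$ rather than $\|z\|^{8-2d}$ and $\|z\|^{6-2d}$ (since $\sum_x\|x\|^{6-2d}$ and $\sum_x\|x\|^{4-2d}$ then converge, the convolution is governed by the $g(z-x)$ factor), but this is still $o(\|z\|^{4-d})=o(G(z))$ and the conclusion is unaffected.
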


	\begin{proof}[\bf Proof of Lemma~\ref{lem:inftobcap}]
		Let $r$ be sufficiently large so that $A,B\subseteq \ball{0}{r}$. Take $\|z\|>2r$. 
		With the same notation as before, i.e.\ writing $X$ for the random walk that the spine performs in its natural parametrisation and denoting by $\tau_{z,r}$ the first time that $X$ hits $\partial \ball{z}{r}$, we get using Lemma~\ref{lem:BCap2} that
		\begin{align*}
			&\frac{\pr{\cT^x\cap A\neq \emptyset, \cT_-^x\cap (z+B)= \emptyset}}{G(z)} \\
			&= (1+o(1))\cdot\frac{1}{G(z)}\cdot \sum_{y\in \partial \ball{z}{r}} \sum_{\gamma: x\to y} \pr{X[0,\tau_{{z},{r}}]=\gamma, \cF_\gamma\cap A\neq \emptyset, \cF_{\gamma,-}\cap (z+B)=\emptyset}\\
			&=(1+o(1))\cdot \sum_{y\in \partial \ball{z}{r}} \sum_{\gamma: x\to y} \pr{X[0,\tau_{{z},{r}}]=\gamma} \pr{\cF_{\gamma,-}\cap(z+B)=\emptyset}\cdot  \left( \frac{\pr{\cT_-^y\cap A\neq \emptyset}}{G(z)} + o(1)\right).
		\end{align*}
		Using again~\eqref{eq:BCap4} we see that 
			$$ \lim_{r\to \infty} \sum_{\gamma : x \to \partial B(z,r)}\pr{X[0,\tau_{{z},{r}}]=\gamma}  \cdot \mathbb P(\mathcal F_{\gamma,-} \cap (z+B)= \emptyset) = \mathbb P(\mathcal T^x_-\cap (z+B)=\emptyset).$$
		Therefore, from this and Lemma~\ref{claim.pastfuturehit} we deduce
		\begin{align*}
			\lim_{\|z\|\to\infty} \frac{\pr{\cT^x\cap A\neq \emptyset, \cT_-^x\cap (z+B)= \emptyset}}{G(z)} &= 2\cdot \bcap{A} \cdot \lim_{\|z\|\to\infty}\sum_{x\in z+B} \pr{\cT_-^x\cap (z+B)=\emptyset} \\
			&= 2\cdot \bcap{A} \cdot \bcap{B}
		\end{align*}
		and this concludes the proof.
	\end{proof}

	\begin{proof}[\bf Proof of~\eqref{limit.BCap}]
		
		The proof follows by combining~\eqref{eq:incexcl} with Lemmas~\ref{lem:hitfinitetoinfinite} and~\ref{lem:inftobcap}.
	\end{proof}

		\begin{proof}[\bf Proof of Lemma~\ref{lem:BCap2}]
		Let $r>0$, $z\in \mathbb Z^d$, and $y\in \partial B(0,r)$ be given. We first label the elements of the spine of $\mathcal T$ together with the root by integers, according to their distance to the root, with the root having label $0$. Call $I$ the smallest label such that the walk hits $A$ on one of the two trees attached to the vertex with label $I$, and $J$ the smallest label such that the walk hits $z+B$ on the tree attached to the vertex with label $J$ on the left of the spine. We distinguish three cases. Either $I<J$, $I>J$, or $I=J$, resulting in the following bound, with $x\in \mathbb Z^d$ standing for the position of the walk on the spine at the vertex with label $\min(I,J)$,  
		\begin{align}\label{lem:hitAB1}
			\nonumber & \mathbb P(\mathcal T^y \cap A \neq \emptyset, \mathcal T^y_-\cap (z+B)\neq \emptyset) 
			\le \sum_{x\in \mathbb Z^d} g(y-x)\cdot  \Big( \mathbb P(\widehat {\mathcal T}_c^x\cap A \neq \emptyset) \cdot \mathbb P(\mathcal T^x_-\cap (z+B)\neq \emptyset) \\
			& \qquad + \mathbb P(\widetilde {\mathcal T}_c^x\cap (z+B) \neq \emptyset) \cdot \mathbb P(\mathcal T^x\cap A\neq \emptyset) 
			+ \mathbb P(\widehat {\mathcal T}_c^x\cap A \neq \emptyset, \widetilde{ \mathcal T}_c^x \cap (z+B)\neq \emptyset)\Big), 
		\end{align}
		where in the last probability, the two underlying trees $\widehat {\mathcal T}_c$ and $\widetilde{ \mathcal T}_c$ are not independent: the former is the union of the latter, together with another copy, sharing the same root, which is correlated to the first one only through the number of offspring of the root in these two trees. The two first terms on the right hand side of~\eqref{lem:hitAB1} are handled using that by~\cite{Zhu}, for some constant $C>0$ (depending on $A$ and $B$), uniformly in $x\in \mathbb Z^d$, 
		$$\mathbb P(\widehat {\mathcal T}_c^x\cap A \neq \emptyset)\le C\cdot g(x),\quad \textrm{and}\quad \mathbb P(\widetilde {\mathcal T}_c^x\cap (z+B) \neq \emptyset) \le C \cdot g(z-x), $$
		$$\mathbb P(\mathcal T^x\cap A \neq \emptyset)\le C\cdot G(x),\quad \textrm{and}\quad \mathbb P(\mathcal T_-^x\cap (z+B) \neq \emptyset) \le C \cdot G(z-x). $$ 
		Moreover, a direct computation (see also Lemma 2.3 in~\cite{AS24}) shows that for some constant $C>0$, for any $r>0$, any $z$ with $\|z\|>2r$, and any $y\in \partial B(z,r)$, 
		$$\sum_{x\in \mathbb Z^d} g(x-y) \big(g(x) G(z-x ) + G(x) g(z-x)\big) \le C \cdot \frac{G(z)}{r^{d-4}}. $$ 
		Hence, we only need to bound the last probability term in~\eqref{lem:hitAB1}. As was recalled above $\widehat{\mathcal T}_c$ there is the union of two trees, say $\widetilde{\mathcal T}_{c,1}$ and $\widetilde{\mathcal T}_{c,2}$, which are copies of $\widetilde{\mathcal T}_c$, sharing the same root, and 
		correlated only through the  number of offspring of the root in the two trees. More precisely, the probability that the root has $i$ children in the first tree and $j$ in the second one is equal to $\mu(i+j+1)$. One then has 
		\begin{align}\label{lem:hitAB2}
			\nonumber & \mathbb P(\widehat {\mathcal T}_c^x\cap A \neq \emptyset, \widetilde{ \mathcal T}_c^x \cap (z+B)\neq \emptyset)\\
			&\quad \le \mathbb P(\widetilde{\mathcal T}_{c,1}^x \cap A \neq \emptyset, \widetilde{\mathcal T}_{c,2}^x \cap (z+B)\neq \emptyset)
			+ \mathbb P(\widetilde{\mathcal T}_c^x \cap A \neq \emptyset, \widetilde{\mathcal T}_c^x \cap (z+B)\neq \emptyset). 
		\end{align} 
		Concerning the first term on the right hand side above, one can condition first on the number of offspring of the root on both trees $\widetilde{\mathcal T}_{c,1}$ and $\widetilde{\mathcal T}_{c,2}$, and use a union bound on all trees emanating from the children of the root. Recalling that $\mu$ is assumed to have a finite third moment, we obtain that this first term is bounded by a constant (depending on $A$ and $B$) times $g(x) g(z-x)$, and hence can be handled as above. As for the second term on the right-hand side of~\eqref{lem:hitAB2}, we use a standard second moment bound. 
		Call $u_1$ the first vertex (for the lexicographical order) of $\widetilde{\mathcal T}_c$ at which the walk hits $A$ and $u_2$ the first one at which the walk hits $z+B$. Summing over all possible locations~$x'$ of the walk at the most recent common ancestor of $u_1$ and $u_2$, we get that 
		for some constant $C>0$, 
		\begin{align*}
			\mathbb P(\widetilde{\mathcal T}_c^x \cap A \neq \emptyset, \widetilde{\mathcal T}_c^x \cap (z+B)\neq \emptyset)
			& \le C \sum_{x'\in \mathbb Z^d} g(x'-x) g(x') g(x'-z)\\
			& \le C\big(g(x)G(z-x) + G(x)g(z-x)\big),
		\end{align*} 
		and we conclude the proof using the same argument as above. 
	\end{proof}

	\begin{proof}[\bf Proof of Lemma~\ref{claim.pastfuturehit}]
		Denote by $\mathcal T_+$ the set of vertices of $\mathcal T$ which are not in $\mathcal T_-$. Since the walk indexed by the spine of $\mathcal T$ or by the critical tree attached to the right of the root, have much smaller chance to hit a given set from far away than the whole walk indexed by $\mathcal T_-$, we can deduce from~\eqref{BCaphit} that one also has for any finite set $A$, 
		$$\textrm{BCap}(A) = \lim_{\|z\|\to \infty} \frac{\mathbb P(\mathcal T_+^z\cap A\neq \emptyset)}{G(z)}. $$ 
		Hence it just amounts to showing that 
		$$\lim_{\|z\|\to \infty} \frac{\mathbb P(\mathcal T_+^z\cap A\neq \emptyset,\mathcal T_-^z\cap A\neq \emptyset)}{G(z)} = 0. $$ 
		Using a similar argument and the same notation as in the proof of Lemma~\ref{lem:BCap2}, we get 
		\begin{align*}
			& \mathbb P(\mathcal T_+^z\cap A\neq \emptyset,\mathcal T_-^z\cap A\neq \emptyset) \\
			&\le \sum_{x\in \mathbb Z^d} 
			g(x-z) \cdot \Big( \mathbb P(\mathcal T^x_-\cap A \neq \emptyset)\cdot \mathbb P(\widetilde{\mathcal T}^x_c\cap A \neq \emptyset) 
			+ \mathbb P(\mathcal T^x_+\cap A \neq \emptyset)\cdot \mathbb P(\widetilde{\mathcal T}^x_c\cap A \neq \emptyset) \\
			& \quad + \mathbb P(\widetilde{\mathcal T}_{c,1}^x \cap A \neq \emptyset, \widetilde{\mathcal T}_{c,2}^x \cap A\neq \emptyset)\Big) \\
			& \le C\sum_{x\in \mathbb Z^d} g(x-z) \big(G(x) g(x) + g(x)^2\big) \le C \cdot \|z\|^{3-d}, 
		\end{align*}
		which concludes the proof, thanks to~\eqref{asympG}. 
	\end{proof}

\end{document}